\numberwithin{equation}{section}
\theoremstyle{plain}
\newtheorem{thm}{Theorem}[section]
\theoremstyle{remark}
\newtheorem{rem}{Remark}[section]
\newcommand{\td}{\textup{d}}
\newcommand{\cmdeg}[1]{\sideset{}{_\textup{cm}^{#1}}\deg}
\begin{document}

\title[A double inequality for completely monotonic degree]
{A double inequality for completely monotonic degree of a remainder for an asymptotic expansion of the trigamma function}

\author[F. Qi]{Feng Qi}
\address{Institute of Mathematics, Henan Polytechnic University, Jiaozuo 454010, Henan, China\\
College of Mathematics and Physics, Inner Mongolia University for Nationalities, Tongliao 028043, Inner Mongolia, China\\
School of Mathematical Sciences, Tianjin Polytechnic University, Tianjin 300387, China}
\email{\href{mailto: F. Qi <qifeng618@gmail.com>}{qifeng618@gmail.com}, \href{mailto: F. Qi <qifeng618@hotmail.com>}{qifeng618@hotmail.com}, \href{mailto: F. Qi <qifeng618@qq.com>}{qifeng618@qq.com}}
\urladdr{\url{https://qifeng618.wordpress.com}}

\dedicatory{Dedicated to people facing and battling COVID-19}

\begin{abstract}
In the paper, the author presents a double inequality for completely monotonic degree of a remainder for an asymptotic expansion of the trigamma function. This result partially confirms one in a series of conjectures on completely monotonic degrees of remainders of asymptotic expansions for the logarithm of the gamma function and for polygamma functions.
\end{abstract}

\keywords{completely monotonic degree; remainder; asymptotic expansion; trigamma function; completely monotonic function; conjecture; gamma function; polygamma function}

\subjclass[2010]{Primary 33B15; Secondary 26A48, 26A51, 30E15, 44A10}

\thanks{This paper was typeset using\AmS-\LaTeX}

\maketitle
\tableofcontents

\section{Simple preliminaries}
In the literature~\cite[Section~6.4]{abram}, the functions
\begin{equation*}
\Gamma(w)=\int_0^{\infty}s^{w-1}e^{-s}\td s\quad\text{and}\quad
\psi(w)=[\ln\Gamma(w)]'=\frac{\Gamma'(w)}{\Gamma(w)}
\end{equation*}
for $\Re(w)>0$ are known as the classical Euler gamma function and the digamma function. Moreover, the functions $\psi'(w)$, $\psi''(w)$, $\psi^{(3)}(w)$, and $\psi^{(4)}(w)$ are known as the tri-, tetra-, penta-, and hexa-gamma functions respectively. As a whole, all the derivatives $\psi^{(k)}(w)$ for $k\ge0$ are known as the polygamma functions~\cite{berg-pedersen-Alzer, Zhao-Chu-JIA-392431}.
\par
Recall from the chapters~\cite[Chapter~XIII]{mpf-1993}, \cite[Chapter~1]{Schilling-Song-Vondracek-2nd}, and~\cite[Chapter~IV]{widder} that, if $\phi(t)$ defines on an interval $I$ and satisfies
\begin{equation}\label{cmf-dfn-ineq}
(-1)^{k}\phi^{(k)}(t)\geq0
\end{equation}
for all $t\in I$ and $k\ge0$, then it is called a completely monotonic function on $I$.
Theorem~12b on~\cite[p.~161]{widder} reads that $\phi(t)$ is completely monotonic on $(0,\infty)$ if and only if
\begin{equation}\label{Theorem12b-Laplace}
\phi(t)=\int_0^\infty e^{-ts}\td\sigma(s)
\end{equation}
converges for all $t\in(0,\infty)$, where $\sigma(s)$ is nondecreasing on $(0,\infty)$. The integral representation~\eqref{Theorem12b-Laplace} is equivalent to say that $\phi(t)$ is completely monotonic on $(0,\infty)$ if and only if it is a Laplace transform of $\sigma(s)$ on $(0,\infty)$.
A result in~\cite[p.~98]{Dubourdieu} and~\cite[p.~395]{haerc1}\label{CM-not=0} asserts that, unless $\phi(t)$ is a trivial completely monotonic function, that is, a nonnegative constant on $(0,\infty)$, those inequalities in~\eqref{cmf-dfn-ineq} are all strict on $(0,\infty)$. Why do we investigate completely monotonic functions? One can find historic answers in two monographs~\cite{Schilling-Song-Vondracek-2nd,widder} and closely related references therein.
\par
Let $\phi(t)$ be defined on $(0,\infty)$ and $\phi(\infty)=\lim_{x\to\infty}\phi(t)$. If $t^r[\phi(t)-\phi(\infty)]$ is completely monotonic on $(0,\infty)$, but $t^{r+\varepsilon}[\phi(t)-\phi(\infty)]$ is not for any number $\varepsilon>0$, then $r$ is called the completely monotonic degree of $\phi(t)$ with respect to $t\in(0,\infty)$; if $t^r[\phi(t)-\phi(\infty)]$ is completely monotonic on $(0,\infty)$ for all $r\in\mathbb{R}$, then the completely monotonic degree of $\phi(t)$ with respect to $t\in(0,\infty)$ is said to be $\infty$. The notation $\cmdeg{t}[\phi(t)]$ was designed in~\cite{psi-proper-fraction-degree-two.tex} to denote the completely monotonic degree $r$ of $\phi(t)$ with respect to $t\in(0,\infty)$. Why do we investigate completely monotonic degrees? One can find significant answers in the second paragraph of~\cite[Remark~1]{CAM-D-18-02975.tex} or in the papers~\cite{AIMS-Math-2019595.tex, Qi-Agar-Surv-JIA.tex, 22ICFIDCAA-Filomat.tex} and closely related references therein.

\section{Motivations}
It is well-known in~\cite[p.~257, 6.1.40]{abram} and~\cite[p.~260, 6.4.11]{abram} that
\begin{equation}\label{ln-gamma-symp-eq}
\ln\Gamma(w)\sim\biggl(w-\frac12\biggr)\ln w-w+\frac12\ln(2\pi)+\sum_{k=1}^\infty \frac{B_{2k}}{2k(2k-1)w^{2k-1}}
\end{equation}
and
\begin{equation}\label{asymptotic-polypsi}
\psi^{(n)}(w)\sim(-1)^{n-1}\biggl[\frac{(n-1)!}{w^n}+\frac{n!}{2w^{n+1}}+\sum_{k=1}^\infty B_{2k}\frac{(2k+n-1)!}{(2k)!w^{2k+n}}\biggr]
\end{equation}
as $w\to\infty$ in $\lvert \arg w\rvert<\pi$ for $n\ge0$, where an empty sum is understood to be $0$ and $B_{2k}$ for $k\ge1$ are known as the Bernoulli numbers which can be generated~\cite{CAM-D-18-00067.tex} by
\begin{equation*}
\frac{w}{e^w-1}=1-\frac{w}2+\sum_{k=1}^\infty B_{2k}\frac{w^{2k}}{(2k)!}, \quad| w| <2\pi.
\end{equation*}
Stimulated by the asymptotic expansions~\eqref{ln-gamma-symp-eq} and~\eqref{asymptotic-polypsi}, many mathematicians considered and investigated the remainders
\begin{equation*}
R_n(t)=(-1)^n\Biggl[\ln\Gamma(t)-\biggl(t-\frac12\biggr)\ln t+t-\frac12\ln(2\pi) -\sum_{k=1}^n\frac{B_{2k}}{2k(2k-1)}\frac1{t^{2k-1}}\Biggr]
\end{equation*}
and its derivatives, where $n\ge0$. For detailed information, please refer to the papers~\cite{Allasia-Gior-Pecaric-MIA-02, merkle-jmaa-96}, \cite[Theorem~8]{psi-alzer}, \cite[Section~1.4]{Sharp-Ineq-Polygamma-Slovaca.tex}, \cite[Theorem~2]{Koumandos-jmaa-06}, \cite[Theorem~2.1]{Koumandos-Pedersen-09-JMAA}, \cite[Theorem~3.1]{Mortici-AA-10-134}, and closely related references therein.
\par
In~\cite[Section~4]{CAM-D-18-02975.tex} and~\cite{Mansour-Qi-CMD.tex-arXiv, Mansour-Qi-CMD.tex-HAL}, the author posed, modified, and finalized the following conjectures:
\begin{enumerate}
\item
the completely monotonic degrees
\begin{align}
\cmdeg{t}\biggl[\ln t-\frac{1}{2t}-\psi(t)\biggr]&=1, \label{pis-ln-deg=1}\\
\cmdeg{t}\biggl[\frac{1}{t}+\frac{1}{2t^2}+\frac{1}{6t^3}-\psi'(t)\biggr]&=3, \label{pis-ln-deg=3}\\
\cmdeg{t}\biggl[\psi'(t)-\biggl(\frac{1}{t}+\frac{1}{2t^2}+\frac{1}{6t^3}-\frac{1}{30t^5}\biggr)\biggr]&=4 \label{pis-ln-deg=5}
\end{align}
with respect to $t\in(0,\infty)$ are valid;
\item
when $m=0$, the completely monotonic degrees of $R_n(t)$ with respect to $t\in(0,\infty)$ satisfy
\begin{equation}\label{R-01-0th-deg}
\cmdeg{t}[R_0(t)]=0,\quad \cmdeg{t}[R_1(t)]=1,
\end{equation}
and
\begin{equation}\label{R-n-0th-deg}
\cmdeg{t}[R_n(t)]=2(n-1), \quad n\ge2;
\end{equation}
\item
when $m=1$, the completely monotonic degrees of $-R_n'(t)$ with respect to $t\in(0,\infty)$ satisfy
\begin{equation}\label{R0p=1-R1p=2}
\cmdeg{t}[-R_0'(t)]=1,\quad \cmdeg{t}[-R_1'(t)]=2,
\end{equation}
and
\begin{equation}\label{Qi-Conj-Gam-Deg-der1(2n-1)}
\cmdeg{t}[-R_n'(t)]=2n-1, \quad n\ge2;
\end{equation}
\item
when $m\ge2$, the completely monotonic degrees of $(-1)^mR_n^{(m)}(t)$ with respect to $t\in(0,\infty)$ satisfy
\begin{equation}\label{R0mR1m-conj-Eq}
\cmdeg{t}\bigl[(-1)^mR_0^{(m)}(t)\bigr]=m, \quad \cmdeg{t}\bigl[(-1)^mR_1^{(m)}(t)\bigr]=m+1,
\end{equation}
and
\begin{equation}\label{R-n-m-th-deg}
\cmdeg{t}\bigl[(-1)^mR_n^{(m)}(t)\bigr]=m+2(n-1), \quad n\ge2.
\end{equation}
\end{enumerate}
\par
In~\cite[Theorem~1]{psi-alzer}, \cite[Theorem~1]{theta-new-proof.tex-BKMS}, \cite[Theorem~2.1]{Mansour-Qi-CMD.tex-arXiv}, \cite[Theorem~2.1]{Mansour-Qi-CMD.tex-HAL}, and~\cite[Theorem~3]{Xu=Cen-Qi-Conj-JIA2020}, the first conjecture in~\eqref{R0p=1-R1p=2}, which is equivalent to~\eqref{pis-ln-deg=1}, was unconsciously or consciously verified again and again.
\par
In~\cite[Theorem~2.1]{Koumandos-Pedersen-09-JMAA}, it was proved that
\begin{equation*}
\cmdeg{t}\bigl[R_n(t)\bigr]\ge n, \quad n\ge0.
\end{equation*}
This result is weaker than those conjectures in~\eqref{R-n-0th-deg}.
\par
In~\cite[Theorem~1]{Chen-Qi-Srivastava-09.tex}, \cite[Theorem~2]{CAM-D-18-02975.tex}, and~\cite[Theorem~4]{Xu=Cen-Qi-Conj-JIA2020}, the second conjecture in~\eqref{R0p=1-R1p=2} was proved once again.
\par
In~\cite[Theorem~2.1]{Mansour-Qi-CMD.tex-arXiv} and~\cite[Theorem~2.1]{Mansour-Qi-CMD.tex-HAL}, those five conjectures in~\eqref{R-01-0th-deg}, \eqref{R0p=1-R1p=2}, and~\eqref{Qi-Conj-Gam-Deg-der1(2n-1)} were confirmed.
\par
In~\cite[Theorems~1 and~2]{Xu=Cen-Qi-Conj-JIA2020}, it was acquired that
\begin{equation*}
\cmdeg{t}\bigl[(-1)^2R_0''(t)\bigr]=2 \quad\text{and}\quad
\cmdeg{t}\bigl[(-1)^2R_1''(t)\bigr]=3.
\end{equation*}
These two results confirm conjectures in~\eqref{R0mR1m-conj-Eq} just for the case $m=2$. The latter is equivalent to the conjecture~\eqref{pis-ln-deg=3}.
\par
The main aim of this paper is to partially confirm the conjecture~\eqref{pis-ln-deg=5}, a special case $m=n=2$ of the conjecture~\eqref{R-n-m-th-deg}, by the double inequality
\begin{equation}\label{Rn2m2deg=4}
4\le\cmdeg{t}\bigl[(-1)^2R_2''(t)\bigr]\le5.
\end{equation}

\section{A double inequality and its proof}
The conjecture~\eqref{pis-ln-deg=5}, or say, a special case $m=n=2$ of the conjecture in~\eqref{R-n-m-th-deg}, can be partially confirmed by the following theorem.

\begin{thm}\label{pis-ln-deg=5-thm}
The completely monotonic degree of the function
\begin{equation*}
Q(t)=\psi'(t)-\frac{1}{t}-\frac{1}{2t^2}-\frac{1}{6t^3}+\frac{1}{30t^5}
\end{equation*}
with respect to $t\in(0,\infty)$ is not less that $4$ and not greater than $5$. In other words, the double inequality~\eqref{Rn2m2deg=4} is valid.
\end{thm}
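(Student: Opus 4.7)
The plan is to prove the two ends of \eqref{Rn2m2deg=4} separately, and the upper bound is essentially a boundary-behaviour argument. Because $\psi'(t)=1/t^2+O(1)$ as $t\to 0^+$, we have $Q(t)\sim \frac{1}{30t^5}$ as $t\to 0^+$, and hence $t^{5+\varepsilon}Q(t)\to 0$ as $t\to 0^+$ for every $\varepsilon>0$. A completely monotonic function on $(0,\infty)$ is non-negative and non-increasing, so if its limit at $0^+$ equals $0$ it must vanish identically on $(0,\infty)$; since $Q(t)>0$ on $(0,\infty)$ (as the integral representation below makes evident), the function $t^{5+\varepsilon}Q(t)$ cannot be completely monotonic for any $\varepsilon>0$, proving $\cmdeg{t}[Q(t)]\leq 5$.

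For the lower bound $\cmdeg{t}[Q(t)]\geq 4$, I would first combine the Laplace representation $\psi'(t)=\int_0^\infty\frac{s}{1-e^{-s}}e^{-ts}\td s$ with $1/t^k=\frac{1}{(k-1)!}\int_0^\infty s^{k-1}e^{-ts}\td s$ to obtain
\begin{equation*}
Q(t)=\int_0^\infty g(s)e^{-ts}\td s,\qquad g(s)=\frac{s}{1-e^{-s}}-1-\frac{s}{2}-\frac{s^2}{12}+\frac{s^4}{720}.
\end{equation*}
The Bernoulli expansion of $\frac{s}{1-e^{-s}}$ forces the Maclaurin series of $g$ to begin at order $s^6$, so $g^{(j)}(0)=0$ for $j=0,\dots,5$; four integrations by parts are therefore justified with vanishing boundary terms, yielding $t^4Q(t)=\int_0^\infty g^{(4)}(s)e^{-ts}\td s$. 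By the Widder characterization \eqref{Theorem12b-Laplace} it then suffices to prove $g^{(4)}(s)\geq 0$ on $(0,\infty)$.

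The main obstacle is this positivity. My plan is to use the Mittag-Leffler expansion $\frac{s}{1-e^{-s}}=1+\frac{s}{2}+2s^2\sum_{k=1}^\infty\frac{1}{s^2+(2\pi k)^2}$ together with Euler's evaluations $\sum_{k\geq 1} 1/(2\pi k)^2=1/24$ and $\sum_{k\geq 1}1/(2\pi k)^4=1/1440$ to absorb the subtracted polynomial terms into each summand. The elementary identity $\frac{s^2}{s^2+a^2}-\frac{s^2}{a^2}+\frac{s^4}{a^4}=\frac{s^6}{a^4(s^2+a^2)}$ then gives the manifestly positive representation
\begin{equation*}
g(s)=2s^6\sum_{k=1}^\infty\frac{1}{(2\pi k)^4\bigl[s^2+(2\pi k)^2\bigr]}.
\end{equation*}
Differentiating four times term by term and applying the polynomial division $\frac{s^6}{s^2+a^2}=s^4-a^2s^2+a^4-\frac{a^6}{s^2+a^2}$ together with the closed-form expression for $\frac{d^4}{ds^4}\frac{1}{s^2+a^2}$ reduces each summand to a positive constant multiple of $\frac{s^2(s^8+5a^2s^6+10a^4s^4+5a^6s^2+15a^8)}{(s^2+a^2)^5}$, which is non-negative; summing gives $g^{(4)}(s)\geq 0$, completing the proof.
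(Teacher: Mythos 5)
Your argument is correct, and both halves take routes genuinely different from the paper's. For the lower bound, the skeleton coincides (Laplace representation of $Q$, four integrations by parts justified by the vanishing of $g^{(j)}(0)$ for $j\le3$, reduction to $g^{(4)}\ge0$), but your positivity proof does not: the paper computes $h^{(4)}$ explicitly as a rational expression in $s$ and $e^s$ and argues positivity by exhibiting the Maclaurin coefficients $\bigl[5^k+(110k-350)3^k+5(3k-50)2^{2k-1}+(165k+350)2^{k}+30k+125\bigr]/k!$ of the numerator, whose nonnegativity for general $k\ge7$ is asserted rather than proved; your Mittag--Leffler route instead yields the closed form $g(s)=2s^6\sum_{k\ge1}\bigl[(2\pi k)^4\bigl(s^2+(2\pi k)^2\bigr)\bigr]^{-1}$ (which I checked reproduces the leading term $s^6/30240$, via $\zeta(6)$), and each summand's fourth derivative is the manifestly nonnegative $48s^2\bigl(s^8+5a^2s^6+10a^4s^4+5a^6s^2+15a^8\bigr)/\bigl[a^4(s^2+a^2)^5\bigr]$ with $a=2\pi k$, which I verified. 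This buys a transparent, checkable positivity proof at the cost of one routine justification you should state explicitly: term-by-term differentiation of the series, which holds because the fourth-derivative terms are $O(k^{-6})$ uniformly for $s$ in compact subsets of $(0,\infty)$. For the upper bound, the paper argues via strict negativity of $\bigl[t^{4+\varepsilon}Q(t)\bigr]'$ for nontrivial completely monotonic functions and the limit $-4-tQ'(t)/Q(t)\to1$ as $t\to0^+$; your argument---that $t^{5+\varepsilon}Q(t)\to0$ as $t\to0^+$ while a nontrivial completely monotonic function, being nonnegative and nonincreasing, cannot tend to $0$ at the left endpoint---is more elementary and avoids computing $Q'$ altogether. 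Both correctly give $\cmdeg{t}[Q(t)]\le5$.
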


\begin{proof}
Making use of the integral representations
\begin{equation*}
\frac1{w^\tau}=\frac1{\Gamma(\tau)}\int_0^\infty s^{\tau-1}e^{-ws}\td s, \quad \Re(w), \Re(\tau)>0
\end{equation*}
and
\begin{equation*}
\psi^{(k)}(w)=(-1)^{k+1}\int_{0}^{\infty}\frac{s^{k}}{1-e^{-s}}e^{-ws}\td s, \quad \Re(w)>0, k\ge1
\end{equation*}
in~\cite[p.~260, 6.4.1]{abram} and~\cite[p.~255, 6.1.1]{abram} and directly computing, we obtain
\begin{align*}
Q(t)&=\int_{0}^{\infty}\biggl(\frac{s}{1-e^{-s}}-1-\frac{s}{2}-\frac{s^2}{12}+\frac{s^4}{6!}\biggr)e^{-ts}\td s\\
&\triangleq\int_{0}^{\infty}h(s)e^{-ts}\td s,\\
h'(s)&=\frac{s^3+\bigl(s^3-30s+90\bigr)e^{2s}-2s\bigl(s^2+60\bigr)e^s-30s-90}{180(e^s-1)^2},\\
h''(s)&=\frac{e^{3s}\bigl(s^2-10\bigr)+3\bigl(s^2+20s+30\bigr)e^s-3\bigl(s^2-20s+30\bigr)e^{2s}-s^2+10}{60(e^s-1)^3},\\
h^{(3)}(s)&=\frac{se^{4s}+(90-34s)e^{3s}-114se^{2s}-2(17s+45)e^s+s}{30(e^s-1)^4},\\
h^{(4)}(s)&=\frac{\begin{bmatrix}e^{5s}+5(6s-25)e^{4s}+10(33s-35)e^{3s}\\
+10(33s+35)e^{2s}+5(6s+25)e^s-1\end{bmatrix}}{30(e^s-1)^5}\\
&=\frac{1}{30(e^s-1)^5}\sum_{k=7}^{\infty}\frac{\begin{bmatrix}
5^k+(110k-350)3^k+5(3k-50)2^{2k-1}\\
+(165k+350)2^{k} +30k+125\end{bmatrix}}{k!}s^k\\
&=\frac{1}{60(e^s-1)^5}\biggl(\frac{5s^7}{7}+\frac{25s^8}{14}+\frac{193s^9}{84}+\frac{85s^{10}}{42} +\frac{5065s^{11}}{3696}+\dotsm\biggr)\\
&>0
\end{align*}
on $(0,\infty)$. Consecutively utilizing the L'H\^ospital rule gives
\begin{equation*}
\lim_{s\to0^+}h(s)=0,\quad\lim_{s\to0^+}h'(s)=0,\quad\lim_{s\to0^+}h''(s)=0,\quad\lim_{s\to0^+}h^{(3)}(s)=0.
\end{equation*}
Accordingly, consecutively integrating by parts results in
\begin{align*}
t^4Q(t)&=-t^3\int_{0}^{\infty}h(s)\frac{\td\bigl(e^{-ts}\bigr)}{\td s}\td s\\
&=-t^3\biggl[h(s)e^{-ts}\big|_{s=0^+}^{s=\infty}-\int_{0}^{\infty}h'(s)e^{-ts}\td s\biggr]\\
&=t^3\int_{0}^{\infty}h'(s)e^{-ts}\td s\\
&=-t^2\int_{0}^{\infty}h'(s)\frac{\td\bigl(e^{-ts}\bigr)}{\td s}\td s\\
&=-t^2\biggl[h'(s)e^{-ts}\big|_{s=0^+}^{s=\infty}-\int_{0}^{\infty}h''(s)e^{-ts}\td s\biggr]\\
&=t^2\int_{0}^{\infty}h''(s)e^{-ts}\td s\\
&=-t\int_{0}^{\infty}h''(s)\frac{\td\bigl(e^{-ts}\bigr)}{\td s}\td s\\
&=-t\biggl[h''(s)e^{-ts}\big|_{s=0^+}^{s=\infty}-\int_{0}^{\infty}h^{(3)}(s)e^{-ts}\td s\biggr]\\
&=t\int_{0}^{\infty}h^{(3)}(s)e^{-ts}\td s\\
&=-\int_{0}^{\infty}h^{(3)}(s)\frac{\td\bigl(e^{-ts}\bigr)}{\td s}\td s\\
&=-\biggl[h^{(3)}(s)e^{-ts}\big|_{s=0^+}^{s=\infty}-\int_{0}^{\infty}h^{(4)}(s)e^{-ts}\td s\biggr]\\
&=\int_{0}^{\infty}h^{(4)}(s)e^{-ts}\td s.
\end{align*}
Consequently, the function $t^4Q(t)$ is completely monotonic on $(0,\infty)$. This means that
\begin{equation}\label{Q(t)ge4}
\cmdeg{t}[Q(t)]\ge4.
\end{equation}
\par
For $\Re(w)>0$ and $k\ge1$, we have the recurrent relation
\begin{equation*}
\psi^{(k-1)}(w+1)=\psi^{(k-1)}(w)+(-1)^{k-1}\frac{(k-1)!}{w^k}.
\end{equation*}
See~\cite[p.~260, 6.4.6]{abram}. From this, it follows that
\begin{equation}
\begin{aligned}\label{t2zero-lim-polyg}
\lim_{t\to0^+}\bigl[t^{k}\psi^{(k-1)}(t)\bigr]
&=\lim_{t\to0^+}\biggl(t^{k}\biggl[\psi^{(k-1)}(t+1)-(-1)^{k-1}\frac{(k-1)!}{t^{k}}\biggr]\biggr)\\
&=(-1)^{k}(k-1)!
\end{aligned}
\end{equation}
for $k\ge1$.
If $t^{4+\varepsilon}Q(t)$ for $\varepsilon\in\mathbb{R}$ were completely monotonic on $(0,\infty)$, by virtue of the result mentioned on page~\pageref{CM-not=0} and proved in~\cite[p.~98]{Dubourdieu} and~\cite[p.~395]{haerc1}, we see that the first derivative of $t^{4+\varepsilon}Q(t)$ should be negative on $(0,\infty)$, that is,
\begin{align*}
\bigl[t^{4+\varepsilon}Q(t)\bigr]'&=(4+\varepsilon)t^{3+\varepsilon}Q(t)+t^{4+\varepsilon}Q'(t)\\
&=Q(t)t^{3+\varepsilon}\biggl[(4+\varepsilon)+t\frac{Q'(t)}{Q(t)}\biggr]
\end{align*}
should be negative on $(0,\infty)$. Therefore, we acquire
\begin{equation*}
\varepsilon<-4-t\frac{Q'(t)}{Q(t)}
=-4-t\frac{\psi''(t)-\frac{1}{6t^6}+\frac{1}{2t^4}+\frac{1}{t^3}+\frac{1}{t^2}} {\psi'(t)+\frac{1}{30t^5}-\frac{1}{6t^3}-\frac{1}{2t^2}-\frac{1}{t}}
\to1, \quad t\to0^+
\end{equation*}
on $(0,\infty)$, where we used the limit~\eqref{t2zero-lim-polyg} in the last step. Consequently, we arrive at
\begin{equation}\label{Q(t)<5}
\cmdeg{t}[Q(t)]\le5.
\end{equation}
\par
Combining~\eqref{Q(t)ge4} with~\eqref{Q(t)<5} leads to the inequality~\eqref{Rn2m2deg=4}. The proof of Theorem~\ref{pis-ln-deg=5-thm} is complete.
\end{proof}

\section{An open problem}

For a given point $(a,b)$ on the first quadrant $\{(x,y):x,y>0\}$, let $y=f(x)$ be a differentiable function on $[0,a]$ with $a>0$, $f(0)=0$, and $f(a)=b>0$. Denote $\tan\theta(x)=f'(x)$ and
\begin{equation*}
S_{a,b}(f)=\int_{0}^{a}\cos\theta(x)\td x=\int_{0}^{a}\cos\arctan f'(x)\td x=\int_{0}^{a}\frac{1}{\sqrt{1+[f'(x)]^2}\,}\td x.
\end{equation*}
What is the minimum
$$
\inf\bigl\{S_{a,b}(f):f(0)=0,f(a)=b,f(x)\in C^1([0,a])\bigl\}?
$$
What is the differentiable function $f_m(x)$ on $[0,a]$ such that $S_{a,b}(f_m)$ attains the above infimum?

\begin{rem}
This paper is a slightly revised version of the preprint~\cite{CM-degree-tri-p5.tex}.
\end{rem}

\end{document}